\newtheorem{theorem}{Theorem}[section]
\newtheorem{lemma}[theorem]{Lemma}
\newtheorem{corollary}[theorem]{Corollary}
\begin{document}

\title[Some Properties of Horadam quaternions]{Some Properties of Horadam quaternions}

\author[G. Cerda-Morales]{Gamaliel Cerda-Morales}
\address{Instituto de Matem\'aticas, Pontificia Universidad Cat\'olica de Valpara\'iso, Blanco Viel 596, Valpara\'iso, Chile.}
\email{gamaliel.cerda.m@mail.pucv.cl}


\begin{abstract}
In this paper, we consider the generalized Fibonacci quaternion which is the Horadam quaternion sequence. Then we used the Binet's formula to show some properties of the Horadam quaternion. We get some generalized identities of the Horadam number and generalized Fibonacci quuaternion.

\vspace{2mm}

\noindent\textsc{2010 Mathematics Subject Classification.} 11B39, 11B37.

\vspace{2mm}

\noindent\textsc{Keywords and phrases.} Lucas Number, Generalized Lucas Sequence, Generalized Fibonacci quaternion, Binet's Formula.

\end{abstract}


\maketitle


\section{Introduction}

Fibonacci quaternion and Lucas quaternion cover a wide range of interest in modern mathematics as they appear in the comprehensive works of \cite{Hal1,Hor1,Hor2}. The Fibonacci quaternion $Q_{F,n}$ is the term of the sequence where each term is the sum of the two previous terms beginning with the initial values $Q_{F,0}=i+j+2k$ and $Q_{F,1}=1+i+2j+3k$. The well-known Fibonacci quaternion $Q_{F,n}$ is defined as
\begin{equation}\label{Hor}
Q_{F,n}=F_{n}+iF_{n+1}+jF_{n+2}+kF_{n+3}
\end{equation} 
and the Lucas quaternion is defined as $Q_{L,n}=L_{n}+iL_{n+1}+jL_{n+2}+kL_{n+3}$ for $n\geq0$, where $F_{n}$ and $L_{n}$ are $n$-th Fibonacci and Lucas number, respectively.

Ipek \cite{Ipe} studied the $(p,q)$-Fibonacci quaternions $Q_{\mathcal{F},n}$ which is defined as 
\begin{equation}\label{Ipe}
Q_{\mathcal{F},n}=pQ_{\mathcal{F},n-1}+qQ_{\mathcal{F},n-2},\ n\geq2
\end{equation}
with initial conditions $Q_{\mathcal{F},0}=i+pj+(p^{2}+q)k$, $Q_{\mathcal{F},1}=1+pi+(p^{2}+q)j+(p^{3}+2pq)k$ and $p^{2}+4q>0$. If $p=q=1$, we get the classical Fibonacci quaternion $Q_{F,n}$. If $p=2q=2$, we get the Pell quaternion $Q_{P,n}=P_{n}+iP_{n+1}+jP_{n+2}+kP_{n+3}$, where $P_{n}$ is the $n$-th Pell number.

The well-known Binet's formulas for $(p,q)$-Fibonacci quaternion and $(p,q)$-Lucas quaternion, see \cite{Ipe}, are given by
\begin{equation}\label{Binet}
Q_{\mathcal{F},n}=\frac{\underline{\alpha}\alpha^{n}-\underline{\beta}\beta^{n}}{\alpha-\beta}\ \textrm{and}\ Q_{\mathcal{L},n}=\underline{\alpha}\alpha^{n}+\underline{\beta}\beta^{n},
\end{equation}
where $\alpha,\beta$ are roots of the characteristic equation $t^{2}-pt-q=0$, and $\underline{\alpha}=1+\alpha i+\alpha^{2} j+\alpha^{3} k$ and $\underline{\beta}=1+\beta i+\beta^{2} j+\beta^{3} k$. We note that $\alpha+\beta=p$, $\alpha \beta=-q$ and $\alpha-\beta=\sqrt{p^{2}+4q}$.

The generalized of Fibonacci quaternion $Q_{w,n}$ is defined by Halici and Karata\c{s} in \cite{Hal3} as $Q_{w,0}=a+bi+(pb+qa)j+((p^{2}+q)b+pqa)k$, $Q_{w,1}=b+(pb+qa)i+((p^{2}+q)b+pqa)j+((p^{3}+2pq)b+q(p^{2}+q)a)$ and $Q_{w,n}=pQ_{w,n-1}+qQ_{w,n-2}$, for $n\geq2$ which we call the generalized Fibonacci quaternions. So, each term of the generalized Fibonacci sequence $\{Q_{w,n}\}_{n\geq0}$ is called generalized Fibonacci quaternion.

The  Binet formula for generalized Fibonacci quaternion $Q_{w,n}$, see \cite{Hal3}, is given by
\begin{equation}\label{Binet2}
Q_{w,n}=\frac{A\underline{\alpha}\alpha^{n}-B\underline{\beta}\beta^{n}}{\alpha-\beta},
\end{equation}
where $A=b-a\beta$, $B=b-a\alpha$, $\alpha,\beta$ are roots of the characteristic equation $t^{2}-pt-q=0$, and $\underline{\alpha}=1+\alpha i+\alpha^{2} j+\alpha^{3} k$ and $\underline{\beta}=1+\beta i+\beta^{2} j+\beta^{3} k$. If $a=0$ and $b=1$, we get the classical $(p,q)$-Fibonacci quaternion $Q_{\mathcal{F},n}$. If $a=2$ and $b=p$, we get the $(p,q)$-Lucas quaternion $Q_{\mathcal{L},n}$.

In this paper, we study some properties of the $(p,q)$-Fibonacci quaternions, $(p,q)$-Lucas quaternions and the generalized Fibonacci quaternions.

\section{Main results}
There are three well-known identities for generalized Fibonacci numbers, namely, Catalan's, Cassini's, and d'Ocagne's identities. The proofs of these identities are based on Binet formulas. We can obtain these types of identities for generalized Fibonacci quaternions using the Binet formula for $Q_{w,n}$. Then, we require $\underline{\alpha} \underline{\beta}$ and $\underline{\beta}\underline{\alpha}$. These products are given in the next lemma.
\begin{lemma}
We have
\begin{equation}\label{eq:1}
\underline{\alpha}\underline{\beta}=Q_{\mathcal{L},0}-[q]-q\Delta\omega,
\end{equation}
and
\begin{equation}\label{eq:2}
\underline{\beta}\underline{\alpha}=Q_{\mathcal{L},0}-[q]+q\Delta\omega,
\end{equation}
where $\omega=qi+pj-k$, $[q]=1-q+q^{2}-q^{3}$ and $\Delta=\alpha-\beta$.
\end{lemma}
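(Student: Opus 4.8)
The plan is to compute the Hamilton product $\underline{\alpha}\,\underline{\beta}$ directly and then read off \eqref{eq:1} and \eqref{eq:2} by matching scalar and vector components. Write $\underline{\alpha}=1+\vec{u}$ and $\underline{\beta}=1+\vec{v}$, where $\vec{u}=(\alpha,\alpha^{2},\alpha^{3})$ and $\vec{v}=(\beta,\beta^{2},\beta^{3})$ are the vector parts. Using the standard quaternion identity $(1+\vec{u})(1+\vec{v})=(1-\vec{u}\cdot\vec{v})+(\vec{u}+\vec{v})+\vec{u}\times\vec{v}$, the computation splits into three pieces — the scalar $1-\vec{u}\cdot\vec{v}$, the symmetric vector part $\vec{u}+\vec{v}$, and the antisymmetric vector part $\vec{u}\times\vec{v}$ — each of which I would then rewrite using $\alpha+\beta=p$, $\alpha\beta=-q$ and $\alpha-\beta=\Delta$.

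First I would handle the scalar term: $\vec{u}\cdot\vec{v}=\alpha\beta+\alpha^{2}\beta^{2}+\alpha^{3}\beta^{3}=-q+q^{2}-q^{3}$, so $1-\vec{u}\cdot\vec{v}=1+q-q^{2}+q^{3}$. Writing $\ell_{n}=\alpha^{n}+\beta^{n}$ for the $(p,q)$-Lucas numbers, we have $\ell_{0}=2$ and $[q]=1-q+q^{2}-q^{3}$, whence $1-\vec{u}\cdot\vec{v}=\ell_{0}-[q]$. The symmetric vector part has components $\ell_{1},\ell_{2},\ell_{3}$, so that $(1-\vec{u}\cdot\vec{v})+(\vec{u}+\vec{v})=\ell_{0}+\ell_{1}i+\ell_{2}j+\ell_{3}k-[q]=Q_{\mathcal{L},0}-[q]$, using $Q_{\mathcal{L},0}=\underline{\alpha}+\underline{\beta}$ from the Binet formula \eqref{Binet}. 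It then remains to evaluate $\vec{u}\times\vec{v}=(\alpha^{2}\beta^{3}-\alpha^{3}\beta^{2},\ \alpha^{3}\beta-\alpha\beta^{3},\ \alpha\beta^{2}-\alpha^{2}\beta)$; pulling $\alpha^{2}\beta^{2}=q^{2}$, $\alpha\beta=-q$ and $\alpha-\beta=\Delta$ out of the three slots turns this into $(-q^{2}\Delta,\,-pq\Delta,\,q\Delta)=-q\Delta(q,p,-1)=-q\Delta\omega$. Combining the three pieces gives $\underline{\alpha}\,\underline{\beta}=Q_{\mathcal{L},0}-[q]-q\Delta\omega$, which is \eqref{eq:1}.

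For \eqref{eq:2} there is no further computation: interchanging $\alpha$ and $\beta$ swaps $\underline{\alpha}$ and $\underline{\beta}$, fixes both the scalar part and the symmetric vector part, and sends $\Delta\mapsto-\Delta$ (equivalently $\vec{v}\times\vec{u}=-\vec{u}\times\vec{v}$), so $\underline{\beta}\,\underline{\alpha}=Q_{\mathcal{L},0}-[q]+q\Delta\omega$. As a consistency check one gets $\underline{\alpha}\,\underline{\beta}+\underline{\beta}\,\underline{\alpha}=2(Q_{\mathcal{L},0}-[q])$ and $\underline{\alpha}\,\underline{\beta}-\underline{\beta}\,\underline{\alpha}=-2q\Delta\omega$, exactly what the scalar/vector decomposition predicts.

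The only real obstacle is bookkeeping. A brute-force, term-by-term expansion of the sixteen products is error-prone both because of the noncommutative signs ($ij=k$ but $ji=-k$, and so on) and because $\alpha\beta=-q$ rather than $+q$, which flips signs in steps such as $\alpha^{2}\beta^{3}-\alpha^{3}\beta^{2}=\alpha^{2}\beta^{2}(\beta-\alpha)=-q^{2}\Delta$. Routing everything through the dot-product/cross-product identity confines the sign-sensitive work to two short symmetric computations and makes \eqref{eq:2} essentially automatic once \eqref{eq:1} is in hand.
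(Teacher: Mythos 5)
Your proposal is correct and follows essentially the same route as the paper: both compute the Hamilton product $\underline{\alpha}\,\underline{\beta}$ directly, with the symmetric part assembling into $Q_{\mathcal{L},0}-[q]$ and the antisymmetric (cross-product) part reducing to $-q\Delta\omega$ via $\alpha\beta=-q$, the second identity following by the sign flip $\Delta\mapsto-\Delta$. Your scalar/vector decomposition is just a tidier packaging of the paper's term-by-term expansion, and all your component computations check out.
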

\begin{proof}
From the definitions of $\underline{\alpha}$ and $\underline{\beta}$, and using $i^{2}=j^{2}=k^{2}=-1$ and $ijk=-1$, we have
\begin{align*}
\underline{\alpha}\underline{\beta}&=2+(\alpha+\beta)i+(\alpha^{2}+\beta^{2})j+(\alpha^{3}+\beta^{3})k\\
&\ \ -(1+\alpha\beta+(\alpha\beta)^{2}+(\alpha\beta)^{3})+\alpha^{2}\beta^{2}(\beta-\alpha)i+\alpha\beta(\alpha^{2}-\beta^{2})j+\alpha\beta(\beta-\alpha)k\\
&=2+pi+(p^{2}+2q)j+(p^{3}+3pq)k-(1-q+q^{2}-q^{3})-q\Delta(qi+pj-k)\\
&=Q_{\mathcal{L},0}-[q]-q\Delta\omega,
\end{align*}
where $[q]=1-q+q^{2}-q^{3}$ and $\omega=qi+pj-k$, and the final equation gives Eq. (\ref{eq:1}). The other identity can be computed similarly.
\end{proof}

This lemma gives us the following useful identity:
\begin{equation}\label{eq:3}
\underline{\alpha}\underline{\beta}+\underline{\beta}\underline{\alpha}=2(Q_{\mathcal{L},0}-[q]).
\end{equation}

Catalan's identities for generalized Fibonacci quaternions are given in the next theorem.
\begin{theorem}
For any integers $m$ and $n$, we have
\begin{equation}\label{eq:4}
Q_{w,m}^{2}-Q_{w,m+n}Q_{w,m-n}=-AB(-q)^{m}\mathcal{F}_{-n}\left((Q_{\mathcal{L},0}-[q])\mathcal{F}_{n}-q\omega\mathcal{L}_{n}\right),
\end{equation}
where $A=b-a\beta$, $B=b-a\alpha$, and $\mathcal{F}_{n}$, $\mathcal{L}_{n}$ are the $n$-th $(p,q)$-Fibonacci and $(p,q)$-Lucas numbers, respectively.
\end{theorem}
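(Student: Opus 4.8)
The plan is to substitute the Binet formula (\ref{Binet2}) for $Q_{w,n}$ into both terms of the left-hand side and exploit the fact that, since $p^{2}+4q>0$, the roots $\alpha,\beta$ are real; hence $A=b-a\beta$, $B=b-a\alpha$ and all powers $\alpha^{k},\beta^{k}$ are scalars commuting with $i,j,k$, and the only non-commutativity to watch is between $\underline{\alpha}$ and $\underline{\beta}$. Expanding, one gets
\[
Q_{w,m}^{2}=\frac{1}{\Delta^{2}}\left(A^{2}\underline{\alpha}^{2}\alpha^{2m}+B^{2}\underline{\beta}^{2}\beta^{2m}-AB(-q)^{m}(\underline{\alpha}\underline{\beta}+\underline{\beta}\underline{\alpha})\right),
\]
while $Q_{w,m+n}Q_{w,m-n}$ has the same pure-square terms $A^{2}\underline{\alpha}^{2}\alpha^{2m}$ and $B^{2}\underline{\beta}^{2}\beta^{2m}$ but cross-terms $-AB\big(\alpha^{m+n}\beta^{m-n}\,\underline{\alpha}\underline{\beta}+\beta^{m+n}\alpha^{m-n}\,\underline{\beta}\underline{\alpha}\big)$. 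Using $\alpha\beta=-q$ I would rewrite $\alpha^{m+n}\beta^{m-n}=(-q)^{m}(\alpha/\beta)^{n}$ and $\beta^{m+n}\alpha^{m-n}=(-q)^{m}(\beta/\alpha)^{n}$, so that upon subtraction the pure-square terms cancel and
\[
Q_{w,m}^{2}-Q_{w,m+n}Q_{w,m-n}=\frac{AB(-q)^{m}}{\Delta^{2}}\left(\underline{\alpha}\underline{\beta}\,((\alpha/\beta)^{n}-1)+\underline{\beta}\underline{\alpha}\,((\beta/\alpha)^{n}-1)\right).
\]

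Next I would write $(\alpha/\beta)^{n}-1=\Delta\mathcal{F}_{n}/\beta^{n}$ and $(\beta/\alpha)^{n}-1=-\Delta\mathcal{F}_{n}/\alpha^{n}$, factor out $\Delta\mathcal{F}_{n}$, and then invoke the Lemma: substitute $\underline{\alpha}\underline{\beta}=Q_{\mathcal{L},0}-[q]-q\Delta\omega$ and $\underline{\beta}\underline{\alpha}=Q_{\mathcal{L},0}-[q]+q\Delta\omega$. The scalar part $Q_{\mathcal{L},0}-[q]$ then gets multiplied by $\beta^{-n}-\alpha^{-n}=\Delta\mathcal{F}_{n}/(-q)^{n}$ and the $\omega$-part by $\beta^{-n}+\alpha^{-n}=\mathcal{L}_{n}/(-q)^{n}$, giving
\[
Q_{w,m}^{2}-Q_{w,m+n}Q_{w,m-n}=AB(-q)^{m-n}\mathcal{F}_{n}\left((Q_{\mathcal{L},0}-[q])\mathcal{F}_{n}-q\omega\mathcal{L}_{n}\right).
\]
Finally I would match this with the claimed form via the negative-index identity $\mathcal{F}_{-n}=-\mathcal{F}_{n}/(-q)^{n}$, which turns $(-q)^{m-n}\mathcal{F}_{n}=(-q)^{m}(-q)^{-n}\mathcal{F}_{n}$ into $-(-q)^{m}\mathcal{F}_{-n}$, yielding exactly (\ref{eq:4}).

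The work here is essentially bookkeeping rather than conceptual: the main obstacle is keeping straight which of the two non-commuting products $\underline{\alpha}\underline{\beta}$, $\underline{\beta}\underline{\alpha}$ pairs with which power of $\alpha,\beta$ in the cross-terms, and managing the exponents $\alpha^{m\pm n}\beta^{m\mp n}$ so that everything collapses cleanly to $\mathcal{F}_{n}$, $\mathcal{L}_{n}$ and a single factor $(-q)^{m-n}$. Care with signs — notably the minus sign in $\mathcal{F}_{-n}$ and the signs inside $\omega=qi+pj-k$ and $[q]=1-q+q^{2}-q^{3}$ — is where an error would most likely arise; no ingredient beyond the Lemma, the Binet formulas, and these elementary identities for $(p,q)$-Fibonacci and $(p,q)$-Lucas numbers is needed.
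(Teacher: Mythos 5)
Your proposal is correct and follows essentially the same route as the paper: expand both sides with the Binet formula (\ref{Binet2}), cancel the pure-square terms $A^{2}\underline{\alpha}^{2}\alpha^{2m}$ and $B^{2}\underline{\beta}^{2}\beta^{2m}$, and apply the Lemma for $\underline{\alpha}\underline{\beta}$ and $\underline{\beta}\underline{\alpha}$. The only differences are cosmetic bookkeeping --- you factor out $(-q)^{m}$ and work with the ratios $(\alpha/\beta)^{\pm n}$ where the paper keeps $\alpha^{2n},\beta^{2n}$ and invokes $\Delta^{2}\mathcal{F}_{n}^{2}=\mathcal{L}_{2n}-2(-q)^{n}$ together with $\mathcal{F}_{2n}=\mathcal{F}_{n}\mathcal{L}_{n}$ --- and your negative-index identity $\mathcal{F}_{-n}=-\mathcal{F}_{n}/(-q)^{n}$ is the correct form (the paper's proof misstates it as $\mathcal{F}_{-n}=-(-q)^{n}\mathcal{F}_{n}$, though its final formula agrees with the correct version).
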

\begin{proof}
From the Binet formula for generalized Fibonacci quaternions $Q_{w,n}$ in (\ref{Binet2}) and $\Delta^{2}=p^{2}+4q$, we have
\begin{align*}
\Delta^{2}&\left(Q_{w,m}^{2}-Q_{w,m+n}Q_{w,m-n}\right)\\
&=\left(A\underline{\alpha}\alpha^{m}-B\underline{\beta}\beta^{m}\right)^{2}- \left(A\underline{\alpha}\alpha^{m+n}-B\underline{\beta}\beta^{m+n}\right)\left(A\underline{\alpha}\alpha^{m-n}-B\underline{\beta}\beta^{m-n}\right)\\
&=AB(-q)^{m-n}\big(\underline{\alpha}\underline{\beta}\alpha^{2n}+\underline{\beta}\underline{\alpha}\beta^{2n}-(-q)^{n}\left(\underline{\alpha}\underline{\beta}+\underline{\beta}\underline{\alpha}\right)\big).
\end{align*}
We require Eqs. (\ref{eq:1}) and (\ref{eq:2}). Using this equations, we obtain
\begin{align*}
Q_{w,m}^{2}&-Q_{w,m+n}Q_{w,m-n}\\
&=\frac{AB(-q)^{m-n}}{\Delta^{2}}\left((Q_{\mathcal{L},0}-[q])(\alpha^{2n}+\beta^{2n}-2(-q)^{n})-q\Delta \omega(\alpha^{2n}-\beta^{2n}))\right)\\
&=\frac{AB(-q)^{m-n}}{\Delta^{2}}\left((Q_{\mathcal{L},0}-[q])(\mathcal{L}_{2n}-2(-q)^{n})-q\Delta^{2} \omega\mathcal{F}_{2n}\right).
\end{align*}
Using the identity $\Delta^{2}\mathcal{F}_{n}^{2}=\mathcal{L}_{2n}-2(-q)^{n}$ gives
$$Q_{w,m}^{2}-Q_{w,m+n}Q_{w,m-n}=AB(-q)^{m-n}\left((Q_{\mathcal{L},0}-[q])\mathcal{F}_{n}^{2}-q\omega\mathcal{F}_{2n}\right),$$
where $\mathcal{L}_{n}$, $\mathcal{F}_{n}$ are the $n$-th $(p,q)$-Lucas and $(p,q)$-Fibonacci numbers, respectively. With the help of the identities $\mathcal{F}_{2n}=\mathcal{F}_{n}\mathcal{L}_{n}$ and $\mathcal{F}_{-n}=-(-q)^{n}\mathcal{F}_{n}$, we have Eq. (\ref{eq:4}). The proof is completed.
\end{proof}

Taking $n=1$ in this theorem and using the identity $\mathcal{F}_{-1}=\frac{1}{q}$, we obtain Cassini's identities for generalized Fibonacci quaternions.
\begin{corollary}
For any integer $m$, we have
\begin{equation}\label{eq:5}
Q_{w,m}^{2}-Q_{w,m+1}Q_{w,m-1}=AB(-q)^{m-1}\left(Q_{\mathcal{L},0}-[q]-pq\omega\right),
\end{equation}
where $A=b-a\beta$, $B=b-a\alpha$ and $[q]=1-q+q^{2}-q^{3}$.
\end{corollary}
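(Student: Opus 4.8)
The plan is to read off (\ref{eq:5}) as the case $n=1$ of Catalan's identity (\ref{eq:4}); essentially nothing is required beyond recording three base values and one exponent manipulation. First I would note, from the definitions of the $(p,q)$-Fibonacci and $(p,q)$-Lucas sequences, that $\mathcal{F}_{1}=1$ and $\mathcal{L}_{1}=p$, and from $\mathcal{F}_{1}=p\mathcal{F}_{0}+q\mathcal{F}_{-1}$ with $\mathcal{F}_{0}=0$ that $\mathcal{F}_{-1}=\tfrac{1}{q}$. Substituting $n=1$ into (\ref{eq:4}) then gives
$$Q_{w,m}^{2}-Q_{w,m+1}Q_{w,m-1}=-AB(-q)^{m}\mathcal{F}_{-1}\bigl((Q_{\mathcal{L},0}-[q])\mathcal{F}_{1}-q\omega\mathcal{L}_{1}\bigr)=-\frac{AB(-q)^{m}}{q}\bigl(Q_{\mathcal{L},0}-[q]-pq\omega\bigr).$$

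It then remains only to invoke the elementary identity $-\dfrac{(-q)^{m}}{q}=(-q)^{m-1}$, valid for every integer $m$ since $(-q)^{m-1}=(-q)^{m}/(-q)$. Inserting this into the previous display reproduces exactly (\ref{eq:5}), and the proof is complete. The only spot calling for care is keeping the exponent on $-q$ straight (and, relatedly, using $\mathcal{F}_{-1}=1/q$ rather than $q$); there is no genuine obstacle here.

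For completeness I note that (\ref{eq:5}) can also be established directly, mirroring the proof of the preceding theorem: expanding $Q_{w,m}^{2}-Q_{w,m+1}Q_{w,m-1}$ with the Binet formula (\ref{Binet2}) and using $\alpha\beta=-q$ collapses the scalar coefficients to
$$Q_{w,m}^{2}-Q_{w,m+1}Q_{w,m-1}=\frac{AB(-q)^{m-1}}{\Delta^{2}}\bigl(\alpha^{2}\underline{\alpha}\underline{\beta}+\beta^{2}\underline{\beta}\underline{\alpha}+q(\underline{\alpha}\underline{\beta}+\underline{\beta}\underline{\alpha})\bigr).$$
One then substitutes (\ref{eq:1}), (\ref{eq:2}) and (\ref{eq:3}) together with $\alpha^{2}+\beta^{2}=p^{2}+2q$ and $\alpha^{2}-\beta^{2}=p\Delta$; the bracket simplifies to $\Delta^{2}(Q_{\mathcal{L},0}-[q]-pq\omega)$, the factor $\Delta^{2}$ cancels, and (\ref{eq:5}) follows.
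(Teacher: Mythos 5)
Your proposal is correct and follows exactly the paper's route: the paper likewise obtains (\ref{eq:5}) by setting $n=1$ in Catalan's identity (\ref{eq:4}) and using $\mathcal{F}_{-1}=\frac{1}{q}$ (together with $\mathcal{F}_{1}=1$, $\mathcal{L}_{1}=p$ and $-(-q)^{m}/q=(-q)^{m-1}$). The additional direct verification via the Binet formula is a nice cross-check but is not needed.
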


The following theorem gives d'Ocagne's identities for generalized Fibonacci quaternions.
\begin{theorem}
For any integers $n$ and $m$, we have
\begin{equation}\label{eq:6}
Q_{w,n}Q_{w,m+1}-Q_{w,n+1}Q_{w,m}=(-q)^{m} AB \left((Q_{\mathcal{L},0}-[q])\mathcal{F}_{n-m}-q\omega \mathcal{L}_{n-m}\right),
\end{equation}
$\mathcal{F}_{n}$, $\mathcal{L}_{n}$ are the $n$-th $(p,q)$-Fibonacci and $(p,q)$-Lucas numbers, respectively.
\end{theorem}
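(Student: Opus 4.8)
The plan is to imitate the proof of Catalan's identity almost verbatim. First I would substitute the Binet formula (\ref{Binet2}) into the left-hand side and clear denominators, writing
\[
\Delta^{2}\left(Q_{w,n}Q_{w,m+1}-Q_{w,n+1}Q_{w,m}\right)=\left(A\underline{\alpha}\alpha^{n}-B\underline{\beta}\beta^{n}\right)\left(A\underline{\alpha}\alpha^{m+1}-B\underline{\beta}\beta^{m+1}\right)-\left(A\underline{\alpha}\alpha^{n+1}-B\underline{\beta}\beta^{n+1}\right)\left(A\underline{\alpha}\alpha^{m}-B\underline{\beta}\beta^{m}\right),
\]
where $\Delta=\alpha-\beta$. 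Expanding both products, the pure terms $A^{2}\underline{\alpha}^{2}\alpha^{n+m+1}$ and $B^{2}\underline{\beta}^{2}\beta^{n+m+1}$ occur identically in each and cancel, so only the mixed terms survive. Since $A,B,\alpha,\beta$ are scalars while $\underline{\alpha},\underline{\beta}$ are not, the terms with $\underline{\alpha}$ on the left collect into a multiple of $\underline{\alpha}\,\underline{\beta}$ and those with $\underline{\beta}$ on the left into a multiple of $\underline{\beta}\,\underline{\alpha}$.

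Next I would simplify the scalar coefficients using $\alpha\beta=-q$: the coefficient of $\underline{\alpha}\,\underline{\beta}$ is $\alpha^{n+1}\beta^{m}-\alpha^{n}\beta^{m+1}=(\alpha\beta)^{m}\alpha^{n-m}(\alpha-\beta)=(-q)^{m}\Delta\,\alpha^{n-m}$, and by symmetry the coefficient of $\underline{\beta}\,\underline{\alpha}$ is $-(-q)^{m}\Delta\,\beta^{n-m}$. Dividing through by $\Delta^{2}$ then gives
\[
Q_{w,n}Q_{w,m+1}-Q_{w,n+1}Q_{w,m}=\frac{AB(-q)^{m}}{\Delta}\left(\underline{\alpha}\,\underline{\beta}\,\alpha^{n-m}-\underline{\beta}\,\underline{\alpha}\,\beta^{n-m}\right).
\]
Now I would substitute the products (\ref{eq:1}) and (\ref{eq:2}). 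The scalar piece $Q_{\mathcal{L},0}-[q]$ pairs with $\alpha^{n-m}-\beta^{n-m}=\Delta\mathcal{F}_{n-m}$, while the $\mp q\Delta\omega$ pieces combine to $-q\Delta\omega(\alpha^{n-m}+\beta^{n-m})=-q\Delta\omega\mathcal{L}_{n-m}$; this produces an overall factor $\Delta$ that cancels the $1/\Delta$ in front and leaves exactly (\ref{eq:6}).

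The only place that requires care is the non-commutativity of the quaternions: the bracket above is $\underline{\alpha}\,\underline{\beta}\,\alpha^{n-m}-\underline{\beta}\,\underline{\alpha}\,\beta^{n-m}$ and \emph{not} the reverse, since swapping $\underline{\alpha}\,\underline{\beta}$ with $\underline{\beta}\,\underline{\alpha}$ would flip the sign of the $q\omega\mathcal{L}_{n-m}$ term by (\ref{eq:1})--(\ref{eq:2}). Apart from that, the computation is the same routine geometric-series bookkeeping used for Catalan's identity, together with the Binet formulas $\mathcal{F}_{k}=(\alpha^{k}-\beta^{k})/\Delta$ and $\mathcal{L}_{k}=\alpha^{k}+\beta^{k}$. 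As a sanity check, putting $m=n-1$ (so $n-m=1$, $\mathcal{F}_{1}=1$, $\mathcal{L}_{1}=p$) recovers Cassini's identity (\ref{eq:5}).
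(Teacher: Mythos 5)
Your proposal is correct and follows essentially the same route as the paper: expand via the Binet formula, observe that the pure terms cancel, collect the mixed terms into $\Delta(-q)^{m}AB\bigl(\underline{\alpha}\underline{\beta}\alpha^{n-m}-\underline{\beta}\underline{\alpha}\beta^{n-m}\bigr)$, and substitute the products from the lemma, with the $\mp q\Delta\omega$ parts pairing with $\mathcal{L}_{n-m}$. The care you take with the order of $\underline{\alpha}\underline{\beta}$ versus $\underline{\beta}\underline{\alpha}$ is exactly the point the paper's computation relies on.
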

\begin{proof}
Using the Binet formula for the generalized Fibonacci quaternions gives
\begin{align*}
\Delta^{2}(Q_{w,n}Q_{w,m+1}&-Q_{w,n+1}Q_{w,m})\\
&=\left(A\underline{\alpha}\alpha^{n}-B\underline{\beta}\beta^{n}\right)\left(A\underline{\alpha}\alpha^{m+1}-B\underline{\beta}\beta^{m+1}\right)\\
&\ \ - \left(A\underline{\alpha}\alpha^{n+1}-B\underline{\beta}\beta^{n+1}\right)\left(A\underline{\alpha}\alpha^{m}-B\underline{\beta}\beta^{m}\right)\\
&=\Delta (-q)^{m} AB \left(\underline{\alpha}\underline{\beta}\alpha^{n-m}- \underline{\beta}\underline{\alpha}\beta^{n-m}\right).
\end{align*}
We require the Eqs. (\ref{eq:1}) and (\ref{eq:2}). Substituting these into the previous equation, we have
\begin{align*}
Q_{w,n}Q_{w,m+1}&-Q_{w,n+1}Q_{w,m}\\
&=\frac{1}{\Delta}(-q)^{m} AB \left((Q_{\mathcal{L},0}-[q])(\alpha^{n-m}-\beta^{n-m})-q\Delta\omega (\alpha^{n-m}+\beta^{n-m}) \right)\\
&=(-q)^{m} AB \left((Q_{\mathcal{L},0}-[q])\mathcal{F}_{n-m}-q\omega \mathcal{L}_{n-m} \right).
\end{align*}
The second identity in the above equality, can be proved using $\mathcal{L}_{n-m}=\alpha^{n-m}+\beta^{n-m}$ and $\Delta\mathcal{F}_{n-m}=\alpha^{n-m}-\beta^{n-m}$. This proof is completed.
\end{proof}

In particular, if $m=n-1$ in this theorem and using the identity $\mathcal{L}_{1}=p$, we obtain Cassini's identities for generalized Fibonacci quaternions. Now, taking $m=n$ in this theorem and using the identities $\mathcal{F}_{0}=0$ and $\mathcal{L}_{0}=2$, we obtain the next identity.
\begin{corollary}
For any integer $n$, we have
\begin{equation}\label{eq:7}
Q_{w,n}Q_{w,n+1}-Q_{w,n+1}Q_{w,n}=2(-q)^{n+1} AB \omega,
\end{equation}
where $A=b-a\beta$, $B=b-a\alpha$ and $\omega=qi+pj-k$.
\end{corollary}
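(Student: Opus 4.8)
The plan is to obtain Corollary~2.6 as a direct specialisation of d'Ocagne's identity in Theorem~2.5, exactly as the surrounding text suggests. Setting $m=n$ in Eq.~(\ref{eq:6}) turns the left-hand side into $Q_{w,n}Q_{w,n+1}-Q_{w,n+1}Q_{w,n}$, which is precisely the commutator appearing in the claimed identity, so no algebraic manipulation is needed there at all.

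For the right-hand side I would substitute $m=n$ into $(-q)^{m}AB\left((Q_{\mathcal{L},0}-[q])\mathcal{F}_{n-m}-q\omega\mathcal{L}_{n-m}\right)$, giving $(-q)^{n}AB\left((Q_{\mathcal{L},0}-[q])\mathcal{F}_{0}-q\omega\mathcal{L}_{0}\right)$. Then I invoke the standard initial values $\mathcal{F}_{0}=0$ and $\mathcal{L}_{0}=2$ for the $(p,q)$-Fibonacci and $(p,q)$-Lucas sequences, which kills the first term entirely and reduces the second to $-2q\omega$. Collecting constants gives $(-q)^{n}AB\cdot(-2q\omega)=2(-q)^{n+1}AB\,\omega$, which is exactly Eq.~(\ref{eq:7}).

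Honestly, there is no real obstacle here: the statement is a one-line corollary whose entire content is bookkeeping of the two evaluations $\mathcal{F}_{0}=0$ and $\mathcal{L}_{0}=2$ together with the sign absorption $(-q)^{n}\cdot(-q)=(-q)^{n+1}$. The only thing worth a sentence of commentary is the conceptual point that the commutator $Q_{w,n}Q_{w,n+1}-Q_{w,n+1}Q_{w,n}$ is generically nonzero precisely because quaternion multiplication is noncommutative — the surviving term $2(-q)^{n+1}AB\,\omega$ with $\omega=qi+pj-k$ being purely imaginary (in the quaternion sense) is what one expects for a commutator of two quaternions. I would therefore write the proof in two or three lines, remarking that it follows from Theorem~2.5 by putting $m=n$ and using $\mathcal{F}_{0}=0$, $\mathcal{L}_{0}=2$.
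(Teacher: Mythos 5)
Your proposal is correct and is exactly the paper's argument: the text preceding the corollary derives it by setting $m=n$ in the d'Ocagne identity of Theorem 2.5 and using $\mathcal{F}_{0}=0$, $\mathcal{L}_{0}=2$, with the same sign absorption $(-q)^{n}(-2q)=2(-q)^{n+1}$. Nothing further is needed.
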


\begin{theorem}
For any integers $n$, $r$ and $s$, we have
\begin{equation}\label{equa:7}
Q_{\mathcal{L},n+r}Q_{\mathcal{F},n+s}-Q_{\mathcal{L},n+s}Q_{\mathcal{F},n+r}=2(-q)^{n+r}\mathcal{F}_{s-r}(Q_{\mathcal{L},0}-[q]).
\end{equation}
\end{theorem}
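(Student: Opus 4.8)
The plan is to substitute the Binet formulas (\ref{Binet}) for both $Q_{\mathcal{L},n}$ and $Q_{\mathcal{F},n}$ into the left-hand side and expand, exactly in the spirit of the d'Ocagne-type argument above. Write $\Delta=\alpha-\beta$ and recall that the scalars $\alpha^{k},\beta^{k}$ commute with the quaternions $\underline{\alpha},\underline{\beta}$ (so they may be pulled out front while the order of the quaternion factors is preserved). Then I would compute
\begin{align*}
\Delta\big(Q_{\mathcal{L},n+r}Q_{\mathcal{F},n+s}&-Q_{\mathcal{L},n+s}Q_{\mathcal{F},n+r}\big)\\
&=\big(\underline{\alpha}\alpha^{n+r}+\underline{\beta}\beta^{n+r}\big)\big(\underline{\alpha}\alpha^{n+s}-\underline{\beta}\beta^{n+s}\big)\\
&\ \ -\big(\underline{\alpha}\alpha^{n+s}+\underline{\beta}\beta^{n+s}\big)\big(\underline{\alpha}\alpha^{n+r}-\underline{\beta}\beta^{n+r}\big).
\end{align*}
The terms carrying $\underline{\alpha}\,\underline{\alpha}$ and $\underline{\beta}\,\underline{\beta}$ have scalar coefficients $\alpha^{2n+r+s}$ and $-\beta^{2n+r+s}$ in \emph{both} products, so they cancel in the difference.

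What survives are the mixed terms, and here is the one point where the non-commutativity of quaternion multiplication must be watched: the coefficient of $\underline{\alpha}\,\underline{\beta}$ and the coefficient of $\underline{\beta}\,\underline{\alpha}$ turn out to be the \emph{same} scalar, namely $\alpha^{n+s}\beta^{n+r}-\alpha^{n+r}\beta^{n+s}$. Consequently the difference collapses to
$$\Delta\big(Q_{\mathcal{L},n+r}Q_{\mathcal{F},n+s}-Q_{\mathcal{L},n+s}Q_{\mathcal{F},n+r}\big)=\big(\alpha^{n+s}\beta^{n+r}-\alpha^{n+r}\beta^{n+s}\big)\big(\underline{\alpha}\,\underline{\beta}+\underline{\beta}\,\underline{\alpha}\big),$$
so that only the symmetric product identity (\ref{eq:3}) is needed, not the individual Lemma identities (\ref{eq:1}) and (\ref{eq:2}). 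This is also what forces the right-hand side to involve $Q_{\mathcal{L},0}-[q]$ only, with the $q\Delta\omega$ contributions of (\ref{eq:1}) and (\ref{eq:2}) cancelling.

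The last step is to simplify the scalar factor. Using $\alpha\beta=-q$ twice,
$$\alpha^{n+s}\beta^{n+r}-\alpha^{n+r}\beta^{n+s}=(\alpha\beta)^{n}\big(\alpha^{s}\beta^{r}-\alpha^{r}\beta^{s}\big)=(-q)^{n}(\alpha\beta)^{r}\big(\alpha^{s-r}-\beta^{s-r}\big)=(-q)^{n+r}\Delta\mathcal{F}_{s-r},$$
where the last equality uses $\alpha^{m}-\beta^{m}=\Delta\mathcal{F}_{m}$. Dividing through by $\Delta$ and applying (\ref{eq:3}) then yields
$$Q_{\mathcal{L},n+r}Q_{\mathcal{F},n+s}-Q_{\mathcal{L},n+s}Q_{\mathcal{F},n+r}=(-q)^{n+r}\mathcal{F}_{s-r}\big(\underline{\alpha}\,\underline{\beta}+\underline{\beta}\,\underline{\alpha}\big)=2(-q)^{n+r}\mathcal{F}_{s-r}\big(Q_{\mathcal{L},0}-[q]\big),$$
which is (\ref{equa:7}). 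There is no genuine obstacle: the argument is the standard Binet-expansion computation used throughout this section, and the only care needed is the bookkeeping of keeping $\underline{\alpha}$ and $\underline{\beta}$ in order while the scalars are extracted; the convenient cancellation of the $\underline{\alpha}\,\underline{\alpha},\underline{\beta}\,\underline{\beta}$ terms together with the equality of the two mixed coefficients does the rest.
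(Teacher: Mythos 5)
Your proof is correct and follows essentially the same route as the paper: expand both products via the Binet formulas, observe that the $\underline{\alpha}^{2}$ and $\underline{\beta}^{2}$ terms cancel and the two mixed terms share the common scalar $(\alpha\beta)^{n}(\alpha^{s}\beta^{r}-\alpha^{r}\beta^{s})=(-q)^{n+r}\Delta\mathcal{F}_{s-r}$, then apply the symmetrized product identity (\ref{eq:3}). Your observation that only (\ref{eq:3}) is needed (so the $q\Delta\omega$ terms of (\ref{eq:1}) and (\ref{eq:2}) never enter) is a slightly cleaner way of stating what the paper does implicitly.
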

\begin{proof}
The Binet formulas for the $(p,q)$-Lucas and $(p,q)$-Fibonacci quaternions give
\begin{align*}
\Delta(Q_{\mathcal{L},n+r}Q_{\mathcal{F},n+s}&-Q_{\mathcal{L},n+s}Q_{\mathcal{F},n+r})\\
&=\left(\underline{\alpha}\alpha^{n+r}+\underline{\beta}\beta^{n+r}\right)\left(\underline{\alpha}\alpha^{n+s}-\underline{\beta}\beta^{n+s}\right)\\
&\ \ -\left(\underline{\alpha}\alpha^{n+s}+\underline{\beta}\beta^{n+s}\right)\left(\underline{\alpha}\alpha^{n+r}-\underline{\beta}\beta^{n+r}\right)\\
&=(\alpha \beta)^{n}(\alpha^{s}\beta^{r}-\alpha^{r}\beta^{s})(\underline{\alpha}\underline{\beta}+\underline{\beta}\underline{\alpha}).
\end{align*}
Using Eqs. (\ref{eq:1}) and (\ref{eq:2}), we have
$$Q_{\mathcal{L},n+r}Q_{\mathcal{F},n+s}-Q_{\mathcal{L},n+s}Q_{\mathcal{F},n+r}=2(-q)^{n+r}\mathcal{F}_{s-r}(Q_{\mathcal{L},0}-[q]).$$
\end{proof}

After deriving these famous identities, we present some other identities for the generalized Fibonacci quaternions. In particular, when using the Binet formulas to obtain identities for the $(p,q)$-Fibonacci and $(p,q)$-Lucas quaternions, we require $\underline{\alpha}^{2}$ and $\underline{\beta}^{2}$. These products are given in the next lemma.
\begin{lemma}
We have
\begin{equation}\label{eq:8}
\underline{\alpha}^{2}=(Q_{\mathcal{L},0}-r_{p,q})+\Delta (Q_{\mathcal{F},0}-s_{p,q}),
\end{equation}
and
\begin{equation}\label{eq:9}
\underline{\beta}^{2}=(Q_{\mathcal{L},0}-r_{p,q})-\Delta (Q_{\mathcal{F},0}-s_{p,q}),
\end{equation}
where $\Delta=\alpha-\beta$, $r_{p,q}=1+\frac{p}{2}(\mathcal{F}_{2}+\mathcal{F}_{4}+\mathcal{F}_{6})+q(\mathcal{F}_{1}+\mathcal{F}_{3}+\mathcal{F}_{5})$, $s_{p,q}=\frac{1}{2}(\mathcal{F}_{2}+\mathcal{F}_{4}+\mathcal{F}_{6})$ and $\mathcal{F}_{n}$ is the $n$-th $(p,q)$-Fibonacci number.
\end{lemma}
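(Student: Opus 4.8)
The plan is to compute $\underline{\alpha}^{2}$ and $\underline{\beta}^{2}$ directly from the definitions $\underline{\alpha}=1+\alpha i+\alpha^{2}j+\alpha^{3}k$, $\underline{\beta}=1+\beta i+\beta^{2}j+\beta^{3}k$, and then reorganize the scalar and vector parts in terms of $Q_{\mathcal{L},0}$, $Q_{\mathcal{F},0}$ and the two scalar corrections. The key simplification is that when one squares a quaternion $x_{0}+x_{1}i+x_{2}j+x_{3}k$, all mixed products of the form $x_{1}x_{2}(ij+ji)$ vanish, so $\underline{\alpha}^{2}=\big(1-\alpha^{2}-\alpha^{4}-\alpha^{6}\big)+2\alpha i+2\alpha^{2}j+2\alpha^{3}k$ and likewise $\underline{\beta}^{2}=\big(1-\beta^{2}-\beta^{4}-\beta^{6}\big)+2\beta i+2\beta^{2}j+2\beta^{3}k$, in the same spirit as the proof of the first Lemma. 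I would then work with the sum and the difference of these two quaternions.

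For the sum, the vector part is $2(\alpha+\beta)i+2(\alpha^{2}+\beta^{2})j+2(\alpha^{3}+\beta^{3})k$, which is exactly the vector part of $2Q_{\mathcal{L},0}$, while the scalar part is $2-(\alpha^{2}+\beta^{2})-(\alpha^{4}+\beta^{4})-(\alpha^{6}+\beta^{6})=2-\mathcal{L}_{2}-\mathcal{L}_{4}-\mathcal{L}_{6}$. Since $Q_{\mathcal{L},0}$ has scalar part $\mathcal{L}_{0}=2$, this yields $\underline{\alpha}^{2}+\underline{\beta}^{2}=2Q_{\mathcal{L},0}-2-(\mathcal{L}_{2}+\mathcal{L}_{4}+\mathcal{L}_{6})$, so all that remains is to identify $r_{p,q}$ with $1+\frac{1}{2}(\mathcal{L}_{2}+\mathcal{L}_{4}+\mathcal{L}_{6})$. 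This follows from the elementary bridge identity $\mathcal{L}_{n}=p\mathcal{F}_{n}+2q\mathcal{F}_{n-1}$ (immediate from the Binet formulas, using $p=\alpha+\beta$ and $q=-\alpha\beta$), which rewrites $\frac{1}{2}(\mathcal{L}_{2}+\mathcal{L}_{4}+\mathcal{L}_{6})$ as $\frac{p}{2}(\mathcal{F}_{2}+\mathcal{F}_{4}+\mathcal{F}_{6})+q(\mathcal{F}_{1}+\mathcal{F}_{3}+\mathcal{F}_{5})$, i.e. as $r_{p,q}-1$.

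For the difference, the vector part is $2(\alpha-\beta)i+2(\alpha^{2}-\beta^{2})j+2(\alpha^{3}-\beta^{3})k=2\Delta\big(i+pj+(p^{2}+q)k\big)=2\Delta Q_{\mathcal{F},0}$, and the scalar part is $-(\alpha^{2}-\beta^{2})-(\alpha^{4}-\beta^{4})-(\alpha^{6}-\beta^{6})=-\Delta(\mathcal{F}_{2}+\mathcal{F}_{4}+\mathcal{F}_{6})$, using $\alpha^{2\ell}-\beta^{2\ell}=\Delta\mathcal{F}_{2\ell}$. Hence $\underline{\alpha}^{2}-\underline{\beta}^{2}=2\Delta\big(Q_{\mathcal{F},0}-\frac{1}{2}(\mathcal{F}_{2}+\mathcal{F}_{4}+\mathcal{F}_{6})\big)=2\Delta(Q_{\mathcal{F},0}-s_{p,q})$. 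Combining, $\underline{\alpha}^{2}=\frac{1}{2}\big((\underline{\alpha}^{2}+\underline{\beta}^{2})+(\underline{\alpha}^{2}-\underline{\beta}^{2})\big)$ produces (\ref{eq:8}) and $\underline{\beta}^{2}=\frac{1}{2}\big((\underline{\alpha}^{2}+\underline{\beta}^{2})-(\underline{\alpha}^{2}-\underline{\beta}^{2})\big)$ produces (\ref{eq:9}).

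The quaternion algebra is essentially free of difficulty, since only squares occur and their noncommutative cross-terms cancel; the only real bookkeeping obstacle is matching the scalar corrections $r_{p,q}$ and $s_{p,q}$ to the power sums $\alpha^{2\ell}\pm\beta^{2\ell}$, and that is handled cleanly once the identity $\mathcal{L}_{n}=p\mathcal{F}_{n}+2q\mathcal{F}_{n-1}$ is in hand. One could equivalently check the two formulas by expanding both sides as polynomials in $p$ and $q$, but routing through $Q_{\mathcal{L},0}$ and $Q_{\mathcal{F},0}$ keeps the conclusion in the stated form.
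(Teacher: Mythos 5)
Your proposal is correct and follows essentially the same route as the paper: both reduce $\underline{\alpha}^{2}$ to a scalar shift of $2\underline{\alpha}$ via the quaternion squaring identity and then split everything into parts symmetric and antisymmetric under $\alpha\leftrightarrow\beta$ (the paper does this term by term using $2\alpha^{m}=\mathcal{L}_{m}+\Delta\mathcal{F}_{m}$ and $\alpha^{n}=\mathcal{F}_{n}\alpha+q\mathcal{F}_{n-1}$, while you do it globally by forming $\underline{\alpha}^{2}\pm\underline{\beta}^{2}$). Your bridge identity $\mathcal{L}_{n}=p\mathcal{F}_{n}+2q\mathcal{F}_{n-1}$ is just the symmetrized form of the paper's $\alpha^{n}=\mathcal{F}_{n}\alpha+q\mathcal{F}_{n-1}$, so the bookkeeping is equivalent and all steps check out.
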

\begin{proof}
From the definitions of $\underline{\alpha}$ and $\underline{\beta}$, and using $i^{2}=j^{2}=k^{2}=-1$, $ijk=-1$ and $\alpha^{n}=\mathcal{F}_{n}\alpha+q\mathcal{F}_{n-1}$ for $n\geq1$, we have
\begin{align*}
\underline{\alpha}^{2}&=2(1+\alpha i+\alpha^{2}j+\alpha^{3}k)-(1+\alpha^{2}+\alpha^{4}+\alpha^{6})\\
&=2+pi+(p^{2}+2q)j+(p^{3}+3pq)k+\Delta(i+pj+(p^{2}+q)k)\\
&\ \ -(1+(\mathcal{F}_{2}\alpha+q\mathcal{F}_{1})+(\mathcal{F}_{4}\alpha+q\mathcal{F}_{3})+(\mathcal{F}_{6}\alpha+q\mathcal{F}_{5}))\\
&=(Q_{\mathcal{L},0}-r_{p,q})+\Delta (Q_{\mathcal{F},0}-s_{p,q}),
\end{align*}
where $r_{p,q}=1+\frac{p}{2}(\mathcal{F}_{2}+\mathcal{F}_{4}+\mathcal{F}_{6})+q(\mathcal{F}_{1}+\mathcal{F}_{3}+\mathcal{F}_{5})$ and $s_{p,q}=\frac{1}{2}(\mathcal{F}_{2}+\mathcal{F}_{4}+\mathcal{F}_{6})$ and the final equation gives Eq. (\ref{eq:8}). The other can be computed similarly.
\end{proof}

We present some interesting identities for $(p,q)$-Fibonacci, $(p,q)$-Lucas quaternions and generalized Fibonacci quaternions.
\begin{theorem}
For any integer $n$, we have
\begin{equation}\label{equa:9}
Q_{\mathcal{L},n}^{2}-Q_{\mathcal{F},n}^{2}=\left( 
\begin{array}{c}
\frac{\Delta^{2}-1}{\Delta^{2}}(Q_{\mathcal{L},0}-r_{p,q})\mathcal{L}_{2n}+(Q_{\mathcal{F},0}-s_{p,q})\mathcal{F}_{2n}  \\ 
+2\frac{(\Delta^{2}+1)(-q)^{n}}{\Delta^{2}}(Q_{\mathcal{L},0}-[q]).
\end{array}%
\right)
\end{equation}
\end{theorem}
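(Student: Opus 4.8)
The plan is to mimic the earlier proofs in this section: feed the Binet formulas (\ref{Binet}) for $Q_{\mathcal{L},n}$ and $Q_{\mathcal{F},n}$ into $Q_{\mathcal{L},n}^{2}-Q_{\mathcal{F},n}^{2}$, expand the two squares, and then eliminate the quaternion products $\underline{\alpha}^{2},\underline{\beta}^{2}$ and $\underline{\alpha}\,\underline{\beta},\underline{\beta}\,\underline{\alpha}$ using the two preceding lemmas, i.e. Eqs.~(\ref{eq:8})--(\ref{eq:9}) and (\ref{eq:1})--(\ref{eq:3}).

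First I would expand. Since the real scalars $\alpha^{n},\beta^{n}$ commute with quaternions, $(\underline{\alpha}\alpha^{n}\pm\underline{\beta}\beta^{n})^{2}=\underline{\alpha}^{2}\alpha^{2n}+\underline{\beta}^{2}\beta^{2n}\pm(\alpha\beta)^{n}(\underline{\alpha}\,\underline{\beta}+\underline{\beta}\,\underline{\alpha})$, so from (\ref{Binet}) together with $\Delta^{2}=p^{2}+4q$ and $\alpha\beta=-q$ one gets
\[
Q_{\mathcal{L},n}^{2}-Q_{\mathcal{F},n}^{2}=\Big(1-\tfrac{1}{\Delta^{2}}\Big)\big(\underline{\alpha}^{2}\alpha^{2n}+\underline{\beta}^{2}\beta^{2n}\big)+\Big(1+\tfrac{1}{\Delta^{2}}\Big)(-q)^{n}\big(\underline{\alpha}\,\underline{\beta}+\underline{\beta}\,\underline{\alpha}\big).
\]
The cross term is immediately disposed of by (\ref{eq:3}): it equals $\tfrac{2(\Delta^{2}+1)(-q)^{n}}{\Delta^{2}}(Q_{\mathcal{L},0}-[q])$, which is precisely the last summand of (\ref{equa:9}).

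For the remaining term I would substitute (\ref{eq:8}) and (\ref{eq:9}); the two contributions in $Q_{\mathcal{F},0}-s_{p,q}$ enter with opposite signs, leaving
\[
\underline{\alpha}^{2}\alpha^{2n}+\underline{\beta}^{2}\beta^{2n}=(Q_{\mathcal{L},0}-r_{p,q})(\alpha^{2n}+\beta^{2n})+\Delta(Q_{\mathcal{F},0}-s_{p,q})(\alpha^{2n}-\beta^{2n}).
\]
Now the standard $(p,q)$-number identities $\mathcal{L}_{2n}=\alpha^{2n}+\beta^{2n}$ and $\Delta\mathcal{F}_{2n}=\alpha^{2n}-\beta^{2n}$ convert this into $(Q_{\mathcal{L},0}-r_{p,q})\mathcal{L}_{2n}+\Delta^{2}(Q_{\mathcal{F},0}-s_{p,q})\mathcal{F}_{2n}$, and multiplying by $\tfrac{\Delta^{2}-1}{\Delta^{2}}$ produces the first two summands of (\ref{equa:9}).

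The last step is pure bookkeeping: collect the coefficients of $(Q_{\mathcal{L},0}-r_{p,q})\mathcal{L}_{2n}$, of $(Q_{\mathcal{F},0}-s_{p,q})\mathcal{F}_{2n}$, and of $(-q)^{n}(Q_{\mathcal{L},0}-[q])$, and compare with (\ref{equa:9}). I expect the only real hazard to be precisely this tracking of the $\Delta^{\pm2}$ factors (and the sign attached to $Q_{\mathcal{F},0}-s_{p,q}$); the quaternion algebra itself contributes nothing beyond what the two lemmas have already packaged, so no genuinely new computation is needed once those are in hand.
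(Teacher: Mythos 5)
Your proposal follows exactly the same route as the paper's own proof: square the Binet forms, kill the cross term with Eq.~(\ref{eq:3}), and reduce $\underline{\alpha}^{2}\alpha^{2n}+\underline{\beta}^{2}\beta^{2n}$ via Eqs.~(\ref{eq:8})--(\ref{eq:9}) and the identities $\mathcal{L}_{2n}=\alpha^{2n}+\beta^{2n}$, $\Delta\mathcal{F}_{2n}=\alpha^{2n}-\beta^{2n}$. One caveat on the ``bookkeeping'' you defer: multiplying $(Q_{\mathcal{L},0}-r_{p,q})\mathcal{L}_{2n}+\Delta^{2}(Q_{\mathcal{F},0}-s_{p,q})\mathcal{F}_{2n}$ by $\tfrac{\Delta^{2}-1}{\Delta^{2}}$ gives the middle term the coefficient $\Delta^{2}-1$, not $1$, so your computation actually yields $(\Delta^{2}-1)(Q_{\mathcal{F},0}-s_{p,q})\mathcal{F}_{2n}$ rather than the $(Q_{\mathcal{F},0}-s_{p,q})\mathcal{F}_{2n}$ appearing in (\ref{equa:9}); this is a defect in the stated theorem (the paper's own proof silently makes the same substitution), not in your argument.
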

\begin{proof}
Using the Binet formulas for the $(p,q)$-Fibonacci and $(p,q)$-Lucas quaternions, we obtain
\begin{align*}
\Delta^{2}(Q_{\mathcal{L},n}^{2}-Q_{\mathcal{F},n}^{2})&=\Delta^{2}\left(\underline{\alpha}\alpha^{n}+\underline{\beta}\beta^{n}\right)^{2}-\left(\underline{\alpha}\alpha^{n}-\underline{\beta}\beta^{n}\right)^{2}\\
&=(\Delta^{2}-1)(\underline{\alpha}^{2}\alpha^{2n}+\underline{\beta}^{2}\beta^{2n})+(\Delta^{2}+1)(\alpha \beta)^{n}(\underline{\alpha}\underline{\beta}+\underline{\beta}\underline{\alpha}).
\end{align*}
Substituting Eqs. (\ref{eq:1}) and (\ref{eq:2}) into the last equation, we have
\begin{equation}\label{eq:10}
\Delta^{2}(Q_{\mathcal{L},n}^{2}-Q_{\mathcal{F},n}^{2})=(\Delta^{2}-1)(\underline{\alpha}^{2}\alpha^{2n}+\underline{\beta}^{2}\beta^{2n})+2(\Delta^{2}+1)(\alpha \beta)^{n}(Q_{\mathcal{L},0}-[q]).
\end{equation}
Then, using Eqs. (\ref{eq:8}) and (\ref{eq:9}), we obtain 
\begin{equation}\label{eq:11}
\underline{\alpha}^{2}\alpha^{2n}+\underline{\beta}^{2}\beta^{2n}=(\alpha^{2n}+\beta^{2n})(Q_{\mathcal{L},0}-r_{p,q})+\Delta (Q_{\mathcal{F},0}-s_{p,q})(\alpha^{2n}-\beta^{2n}).
\end{equation}
Substituting Eq. (\ref{eq:11}) into Eq. (\ref{eq:10}) gives Eq. (\ref{equa:9}).
\end{proof}

\begin{theorem}
For any integers $n$ and $m$, we have
\begin{equation}\label{equa:12}
Q_{\mathcal{F},n}Q_{w,m}-Q_{w,m}Q_{\mathcal{F},n}=2(-q)^{n+1}\omega W_{m-n},
\end{equation}
where $\omega=qi+pj-k$ and $W_{n}=\frac{A\alpha^{n}-B\beta^{n}}{\alpha-\beta}$ is the $n$-th generalized Fibonacci number.
\end{theorem}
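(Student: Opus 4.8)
The plan is to follow the same strategy used for all the preceding identities: substitute the Binet formula (\ref{Binet}) for $Q_{\mathcal{F},n}$ and the Binet formula (\ref{Binet2}) for $Q_{w,m}$, and multiply the commutator through by $\Delta^{2}=(\alpha-\beta)^{2}$. Writing $\Delta=\alpha-\beta$, this gives
\[
\Delta^{2}\bigl(Q_{\mathcal{F},n}Q_{w,m}-Q_{w,m}Q_{\mathcal{F},n}\bigr)=\bigl(\underline{\alpha}\alpha^{n}-\underline{\beta}\beta^{n}\bigr)\bigl(A\underline{\alpha}\alpha^{m}-B\underline{\beta}\beta^{m}\bigr)-\bigl(A\underline{\alpha}\alpha^{m}-B\underline{\beta}\beta^{m}\bigr)\bigl(\underline{\alpha}\alpha^{n}-\underline{\beta}\beta^{n}\bigr).
\]
First I would expand both products, keeping the quaternionic factors $\underline{\alpha},\underline{\beta}$ in their left-to-right order. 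The two ``diagonal'' contributions $A\underline{\alpha}^{2}\alpha^{m+n}$ and $B\underline{\beta}^{2}\beta^{m+n}$ occur identically in each product and cancel, so the difference collapses to $\bigl(\underline{\alpha}\,\underline{\beta}-\underline{\beta}\,\underline{\alpha}\bigr)\bigl(A\alpha^{m}\beta^{n}-B\alpha^{n}\beta^{m}\bigr)$.

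Next I would invoke the first Lemma: subtracting (\ref{eq:1}) from (\ref{eq:2}) gives $\underline{\alpha}\,\underline{\beta}-\underline{\beta}\,\underline{\alpha}=-2q\Delta\omega$. For the scalar coefficient I would factor out $(\alpha\beta)^{n}=(-q)^{n}$, so that $A\alpha^{m}\beta^{n}-B\alpha^{n}\beta^{m}=(-q)^{n}\bigl(A\alpha^{m-n}-B\beta^{m-n}\bigr)=(-q)^{n}\Delta\,W_{m-n}$ directly from the definition of the generalized Fibonacci number $W_{m-n}$. Multiplying these two factors together then yields $\Delta^{2}\bigl(Q_{\mathcal{F},n}Q_{w,m}-Q_{w,m}Q_{\mathcal{F},n}\bigr)=(-2q\Delta\omega)(-q)^{n}\Delta\,W_{m-n}=2(-q)^{n+1}\Delta^{2}\,\omega W_{m-n}$, and cancelling $\Delta^{2}$ gives (\ref{equa:12}).

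The computation is essentially routine; the single point demanding care is the noncommutativity of quaternion multiplication, since it is precisely the discrepancy $\underline{\alpha}\,\underline{\beta}\neq\underline{\beta}\,\underline{\alpha}$ that makes the commutator nonzero, so one must never reorder $\underline{\alpha}$ and $\underline{\beta}$ during the expansion. As a consistency check, specializing to $Q_{w}=Q_{\mathcal{F}}$ (so $A=B=1$ and $W_{n}=\mathcal{F}_{n}$) with $m=n+1$ recovers the identity (\ref{eq:7}), since $\mathcal{F}_{1}=1$.
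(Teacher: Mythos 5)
Your proposal is correct and follows essentially the same route as the paper: expand the commutator via the Binet formulas, observe the cancellation of the $\underline{\alpha}^{2}$ and $\underline{\beta}^{2}$ terms, and use $\underline{\alpha}\,\underline{\beta}-\underline{\beta}\,\underline{\alpha}=-2q\Delta\omega$ from the first Lemma together with $A\alpha^{m-n}-B\beta^{m-n}=\Delta W_{m-n}$. If anything, your bookkeeping of the factor $\Delta^{2}$ is cleaner than the paper's intermediate display (which omits a $1/\Delta^{2}$ before arriving at the same final identity), and your consistency check against (\ref{eq:7}) is a nice touch.
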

\begin{proof}
The Binet formulas for the $(p,q)$-Fibonacci and generalized Fibonacci quaternions give
\begin{align*}
\Delta^{2}(Q_{\mathcal{F},n}Q_{w,m}&-Q_{w,m}Q_{\mathcal{F},n})\\
&=\left(\underline{\alpha}\alpha^{n}-\underline{\beta}\beta^{n}\right)\left(A\underline{\alpha}\alpha^{m}-B\underline{\beta}\beta^{m}\right)\\
&\ \ -\left(A\underline{\alpha}\alpha^{m}-B\underline{\beta}\beta^{m}\right)\left(\underline{\alpha}\alpha^{n}-\underline{\beta}\beta^{n}\right)\\
&=(A\alpha^{m}\beta^{n}-B\alpha^{n}\beta^{m})(\underline{\alpha}\underline{\beta}-\underline{\beta}\underline{\alpha}).
\end{align*}
Using Eqs. (\ref{eq:1}) and (\ref{eq:2}), we have
\begin{align*}
Q_{\mathcal{F},n}Q_{w,m}-Q_{w,m}Q_{\mathcal{F},n}&=(\alpha \beta)^{n}(A\alpha^{m-n}-B\beta^{m-n})(\underline{\alpha}\underline{\beta}-\underline{\beta}\underline{\alpha})\\
&=2(-q)^{n+1}\omega W_{m-n},
\end{align*}
where $\omega=qi+pj-k$ and $W_{n}$ is the $n$-th generalized Fibonacci number defined by $W_{n}=\frac{A\alpha^{n}-B\beta^{n}}{\alpha-\beta}$. 
\end{proof}

Taking $m=n$ in this theorem and using $W_{0}=a$, we obtain the next identity.
\begin{corollary}
For any integer $n$, we have
\begin{equation}\label{eq:13}
Q_{\mathcal{F},n}Q_{w,n}-Q_{w,n}Q_{\mathcal{F},n}=2(-q)^{n+1}a\omega,
\end{equation}
where $A=b-a\beta$, $B=b-a\alpha$ and $\omega=qi+pj-k$.
\end{corollary}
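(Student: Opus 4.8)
The plan is to obtain this as an immediate specialization of the preceding theorem (Eq.~(\ref{equa:12})), which already establishes the commutator identity
$$Q_{\mathcal{F},n}Q_{w,m}-Q_{w,m}Q_{\mathcal{F},n}=2(-q)^{n+1}\omega W_{m-n}$$
for all integers $n$ and $m$, with $W_{n}=\frac{A\alpha^{n}-B\beta^{n}}{\alpha-\beta}$. First I would set $m=n$, so that the right-hand side becomes $2(-q)^{n+1}\omega W_{0}$.

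Next I would evaluate $W_{0}$ directly from its Binet expression: $W_{0}=\frac{A-B}{\alpha-\beta}=\frac{(b-a\beta)-(b-a\alpha)}{\alpha-\beta}=\frac{a(\alpha-\beta)}{\alpha-\beta}=a$, recalling that $A=b-a\beta$ and $B=b-a\alpha$. Substituting $W_{0}=a$ gives $Q_{\mathcal{F},n}Q_{w,n}-Q_{w,n}Q_{\mathcal{F},n}=2(-q)^{n+1}\omega a$.

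Finally, since $a$ is a real scalar it commutes with the quaternion $\omega=qi+pj-k$, hence $2(-q)^{n+1}\omega a=2(-q)^{n+1}a\omega$, which is exactly Eq.~(\ref{eq:13}). There is essentially no obstacle here: the only points requiring a word of justification are the computation $W_{0}=a$ and the (trivial) commutativity of the scalar $a$ with $\omega$; everything else is inherited from the theorem being specialized.
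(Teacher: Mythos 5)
Your proposal is correct and matches the paper exactly: the corollary is obtained by setting $m=n$ in the preceding theorem and using $W_{0}=\frac{A-B}{\alpha-\beta}=a$. Your extra remark that the real scalar $a$ commutes with $\omega$ is a harmless (and accurate) clarification of a step the paper leaves implicit.
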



\end{document}